\newtheorem{theorem}{Theorem}[section]
\newtheorem{lemma}[theorem]{Lemma}
\theoremstyle{definition}
\def\la{{\langle}}
\def\ra{{\rangle}}
\def\char{{\rm char \,}}
\renewcommand{\leq}{\leqslant}
\renewcommand{\geq}{\geqslant}
\newcommand{\aut}[1]{{\sf Aut}(#1)}
\newcommand{\centr}[1]{{\sf Z}(#1)}
\newcommand{\diag}[1]{{\sf diag}(#1)}
\newcommand{\gl}[2]{{\sf GL}(#1,#2)}
\newcommand{\F}{\mathbb F}
\newcommand{\K}{\mathbb K}
\newcommand{\res}[1]{#1^{[p]}}
\newcommand{\ad}{{\sf ad}\,}
\newcommand{\ress}[1]{#1^{[2]}}
\newcommand{\resss}[1]{#1^{[3]}}
\begin{document}

\title[Small dimensional restricted  Lie algebras]
{The classification of $p$-nilpotent restricted  Lie algebras of dimension at most 4}
\author{\textsc{Csaba Schneider}}
\address{Departamento de Matemática,
Instituto de Ciencias Exatas,
Universidade Federal de Minas Gerais,
Av. Antonio Carlos 6627,  31270-901, Belo Horizonte, MG, Brazil}
\email{csaba.schneider@gmail.com}


\author{\textsc{Hamid Usefi}}
\address{Department of Mathematics and Statistics,
Memorial University of Newfoundland,
St. John's, NL,
Canada, 
A1C 5S7}
\email{usefi@mun.ca}

\date{\today}

\thanks{The second author was supported by  NSERC}

\begin{abstract}
In this paper we obtain the classification of $p$-nilpotent restricted Lie algebras of dimension at most four
over a perfect field of characteristic $p$.
\end{abstract}


\maketitle

\section{Introduction}\label{intro}
In this paper we initiate the classification of small dimensional restricted Lie algebras. Similar classifications for
ordinary Lie algebras have a long history. The classification of all nilpotent Lie algebras up to dimension five over any  field  has been known for a long time. However, in dimension 6, the characterization depends on the underlying field. In 1958  Morozov \cite{morozov} gave a classification of nilpotent Lie algebras of dimension 6 over a filed of characteristic zero, see also   \cite{BK, nielsen, gong}  for a classification over other fields. 
These classifications, however, differ and it was not easy to compare them until recently that de Graaf \cite{dg} gave a complete classification over any field of characteristic other than 2.
de Graaf's approach  can be  verified computationally and was later revised and extended to characteristic 2
in \cite{CDS}. The classification in dimensions more than 6 is still in progress, see for example \cite{See, Ro}.

In this paper we give a list of $[p]$-nilpotent restricted Lie algebras of dimension at most 4 over a perfect field of characteristic $p\geq 3$. Let $L$ be a Lie algebra over a field $\F$ of characteristic $p$. Recall  
that $L$ is said to be  {\em restrictable}
 if $L$ affords a $[p]$-map $x\mapsto \res x$  that satisfies
the following properties:
\begin{enumerate}
\item $(\ad a)^p=\ad\res a$;
\item $\res{(\alpha a)}=\alpha^p\res a$;
\item $\res{(a+b)}=\res a+\res b+\sum_{i=1}^{p-1}s_i(a,b)$ 
\end{enumerate}
where $s_i(a,b)$ is given by the formula
$$
\left(\ad(a\otimes X+b\otimes 1)\right)^{p-1}(a\otimes 1)=
\sum_{i=1}^{p-1}is_i(a,b)\otimes X^{i-1}
$$
interpreted in the Lie algebra $L\otimes\F[X]$ over the polynomial 
ring $\F[X]$. A Lie algebra $L$ with a given $[p]$-map $x\mapsto \res x$ is 
said to be {\em restricted}.

Recall that a restricted Lie algebra $L$ is called $[p]$-nilpotent 
if there exists an integer $n$ such that $L^{[p]^n}=0$.
Let $L$ be a finite-dimensional $[p]$-nilpotent restricted Lie algebra.
Then, by Engel's Theorem, $L$ is nilpotent. 
Note that $(x+y)^{[p]}=\res x+ \res y$ modulo $\gamma_p(L)$, 
for every $x, y\in L$. Here, $\gamma_i(L)$ is the $i$-th term of the lower central series of $L$.
So if the nilpotency class  of $L$ is smaller than $p$, then 
 the $[p]$-map  is 
a semilinear transformation from $L$ to the center $\centr L$ of $L$. 
Let $\varphi_1,\ \varphi_2:L\rightarrow\centr L$ be two semilinear 
transformations. Then the restricted Lie algebras $(L,\varphi_1)$ and 
$(L,\varphi_2)$ are isomorphic if and only if there exists
$A\in\aut L$ such that 
$$
x\varphi_1A=xA\varphi_2\quad\mbox{holds for all}\quad x\in L.
$$
Hence, $\varphi_1$ and $\varphi_2$ define isomorphic restricted Lie algebras 
if and only if  there exists 
$A\in\aut L$ such that 
$A\varphi_1A^{-1}=\varphi_2$; that is, they are conjugate under
the automorphism group of $L$.
In this case we say that the $[p]$-maps $\varphi_1$ and $\varphi_2$ are
{\em equivalent}.
This defines a left action of $\aut L$ on the set of $[p]$-maps and the
isomorphism classes of restricted Lie algebras correspond to 
the $\aut L$-orbits under this action.

Our work is motivated by the isomorphism problem for  enveloping 
algebras of restricted Lie algebras. We are interested in understanding when two
non-isomorphic restricted Lie algebras can have isomorphic restricted
enveloping algebras and it makes sense to examine the class of $[p]$-nilpotent 
restricted Lie algebras. The first step towards this is a classification
of such restricted Lie algebras in small dimensions.

\section{The main result}

The main theorem of the paper is a classification of $[p]$-nilpotent 
restricted Lie algebra with dimension at most 4 over
perfect fields $\F$. We use as our starting 
point, the classification of nilpotent Lie algebras of dimension 4 and
classify the possible equivalence classes of $[p]$-maps on these Lie algebras.

In dimensions 1 and 2, 
there is a unique isomorphism type of nilpotent Lie algebras.
There are two nilpotent Lie algebras of dimension 3, 
one is abelian and the other one is nilpotent of class 2. 
There are three nilpotent Lie algebras of dimension 4, one is abelian, one 
nilpotent of class 2, and one nilpotent of class 3. If $p$ is greater than
the nilpotency class, then a $[p]$-map is semilinear, this is however
not always the case  in characteristic 3 and 5.

Our main result assumes that the field is perfect. The reason for this is 
that we often need that the Frobenius automorphism $x\mapsto x^p$ 
of $\F$ is surjective. If $\char\F=2$, then we denote by $\K$ the
Artin--Schreier subspace $\K=\{\delta+\delta^2\mid\delta\in\F\}$. 
In the descriptions of the algebras in characteristic 3 in (4/3), 
we use a subspace $\K_{\beta}$ defined as 
$\K_{\beta}=\{\beta\delta^3+\delta\mid \delta\in\F\}$. 

We note that if $\left<x_1,\ldots,x_k\right>$ is a basis for a Lie algebra $L$, 
then any $[p]$-map on $L$ is determined by the images 
$\res x_1,\ldots,\res x_k$.

\begin{theorem}
Suppose that $L$ is a nilpotent Lie algebra of dimension at most 
$4$ over a perfect field $\F$. Then the equivalence classes of 
the $[p]$-maps on $L$ are as follows.
\begin{enumerate}
\item[(1/1)] If $\dim L=1$ and $L=\left<x_1\right>$:
\begin{enumerate}
\item $\res x_1=0$.
\end{enumerate}
\item[(1/1)] If $L=\left<x_1,x_2\right>$:
\begin{enumerate}
\item $\res x_1=\res x_2=0$;
\item $\res x_1=x_2$, $\res x_2=0$.
\end{enumerate}
\item[(3/1)] If $L=\left<x_1,x_2,x_3\right>$:
\begin{enumerate}
\item $\res x_1=\res x_2=\res x_3=0$;
\item $\res x_1=x_2, \res x_2=\res x_3=0$;
\item $\res x_1=x_2$, $\res x_2=x_3$, $\res x_3=0$.
\end{enumerate}
\item[(3/2)] Suppose that 
$L=\left<x_1,x_2,x_3\mid [x_1,x_2]=x_3\right>$.
If $\char\F\geq 3$:
\begin{enumerate}
\item $\res x_1=\res x_2\res x_3=0$;
\item $\res x_1=x_3$, $\res x_2=\res x_3=0$.
\end{enumerate}
If $\char\F=2$:
\begin{enumerate}
\item $\ress x_1=x_3$, $\ress x_2=\xi x_3$, $\ress x_3=0$.
\end{enumerate}
The parameters $\xi_1$ and $\xi_2$ result in equivalent $[p]$-maps
if and only if $\xi_1+\xi_2\in \K$. 
\item[(4/1)] If $L=\left<x_1,x_2,x_3,x_4\right>$:
\begin{enumerate}
\item $\res x_1=\res x_2=\res x_3=\res x_4=0$;
\item $\res x_1= x_2$, $\res x_1=\res x_2=\res x_3=0$;
\item $\res x_1=x_2$, $\res x_3=x_4$, $\res x_2=\res x_4=0$;
\item $\res x_1=x_2$, $\res x_2=x_3$, $\res x_3=\res x_4=0$;
\item $\res x_1=x_2$, $\res x_2=x_3$, $\res x_3=x_4$, $\res x_4=0$.
\end{enumerate}
\item[(4/2)] Suppose that 
$L=\left<x_1,x_2,x_3,x_4\mid [x_1,x_2]=x_3\right>$. If
$\char\F \geq 3$:
\begin{enumerate}
\item $\res x_1=\res x_2=\res x_3=\res x_4=0$;
\item $\res x_1=x_3$, $\res x_2=\res x_3=\res x_4=0$;
\item $\res x_1=x_4$, $\res x_2=\res x_3=\res x_4=0$;
\item $\res x_1=x_3$, $\res x_2=x_4$, $\res x_3=\res x_4=0$;
\item $\res x_3=x_4$, $\res x_1=\res x_2=\res x_4=0$;
\item $\res x_3=x_4$, $\res x_2=x_3$, $\res x_1=\res x_4=0$;
\item $\res x_4=x_3$, $\res x_1=\res x_2=\res x_3=0$;
\item $\res x_4=x_3$, $\res x_2=x_4$, $\res x_1=\res x_3=0$.
\end{enumerate}
If $\char\F=2$:
\begin{enumerate}
\item $\ress x_1=x_3$, $\ress x_2=\xi x_3$, $\ress x_3=\ress x_4=0$;
\item $\ress x_1=x_4$, $\ress x_2=\ress x_3=\ress x_4=0$;
\item $\ress x_1=x_3$, $\ress x_2=x_4$;
\item $\ress x_3=x_4$, $\ress x_1=x_3$, $\ress x_2=\xi x_3$;
\item $\ress x_4=x_3$, $\ress x_1=\ress x_2=\ress x_3=0$;
\item $\ress x_4=x_3$, $\ress x_2=x_4$, $\ress x_1=\ress x_3=0$.
\end{enumerate}
In cases (a) and (d), 
the parameters $\xi_1,\ \xi_2$ represent equivalent $[p]$-maps if 
and only if $\xi_1+\xi_2\in\K$. 
\item[(4/3)] Suppose that $L=\left<x_1,x_2,x_3,x_4\mid [x_1,x_2]=x_3,\ 
[x_1,x_3]=x_4\right>$. If $\char\F\geq 5$:
\begin{enumerate}
\item $\res x_1=\res x_2=\res x_3=\res x_4=0$;
\item $\res x_1=x_4$, $\res x_2=\res x_3=\res x_4=0$;
\item $\res x_2=\xi x_4$, $\res x_1=\res x_3=\res x_4=0$;
\item $\res x_3=x_4$, $\res x_1=\res x_2=\res x_4=0$.
\end{enumerate}
The parameters $\xi_1$ and $\xi_2$ represent isomorphic algebras if and only
if $\xi_1\xi_2^{-1}$ is a square in $\F$.\\
If $\char \F=3$:
\begin{enumerate}
\item $\resss x_1=\resss x_2=\resss x_3=\resss x_4=0$;
\item $\resss x_3= x_4$, $\ress x_1=\resss x_2=\resss x_4=0$;
\item $\resss x_1=\alpha x_4$, $\resss x_2=\beta x_4$, $\resss x_3=\resss x_4=0$.
\end{enumerate}
Where $\alpha\in\F$, $\beta\in\F^*$ and the pairs $(\alpha_1,\beta_1)$, 
$(\alpha_2,\beta_2)$ represent equivalent $[p]$-maps if and only if 
$\beta_2/\beta_1$ is a square in $F$ and 
$\alpha_1\pm\alpha_2\sqrt{\beta_2/\beta_1}\in\K_{\beta_1}$. 
If $\char\F=2$, then $L$ is not restrictable.
\end{enumerate}
\end{theorem}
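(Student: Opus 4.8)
The plan is to prove this theorem Lie-algebra-by-Lie-algebra, exploiting the fundamental reformulation established in the introduction: isomorphism classes of $[p]$-nilpotent restricted structures on a fixed nilpotent Lie algebra $L$ correspond exactly to $\aut L$-orbits on the set of admissible $[p]$-maps, where two maps are equivalent precisely when conjugate under $\aut L$. So for each of the nilpotent Lie algebras of dimension at most $4$ (there are finitely many: one each in dimensions $1$ and $2$, two in dimension $3$, and three in dimension $4$) I would carry out three tasks. First, determine the set of all admissible $[p]$-maps; second, compute the automorphism group $\aut L$ explicitly as a group of matrices in a fixed basis; and third, compute the orbits of $\aut L$ acting on the $[p]$-maps and select a canonical representative for each orbit.

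The first task is governed by the nilpotency class relative to $p$. When the class is smaller than $p$ (which covers all cases when $\char\F\geq 5$, and the abelian and class-$2$ cases when $\char\F\geq 3$), the remark in the introduction tells us the $[p]$-map is a \emph{semilinear} transformation $L\to\centr L$, so specifying the images $\res{x_i}$ of basis elements subject only to landing in $\centr L$ determines it, and the equivalence becomes semilinear conjugacy. When the class reaches or exceeds $p$ — concretely in the $(3/2)$ and $(4/2)$ cases with $\char\F=2$, and the $(4/3)$ case with $\char\F=2,3$ — the $[p]$-map is no longer semilinear: I would return to the defining axioms (1)–(3) and the explicit $s_i(a,b)$ formula, using property (1) to force $\res{x_i}$ into the appropriate part of $L$ (since $\ad\res{x_i}=(\ad x_i)^p$ must hold) and using (3) with the $s_i$ terms to pin down the nonlinear contributions. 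For $(4/3)$ with $\char\F=2$ I would verify directly that no consistent $[p]$-map exists, establishing non-restrictability; this reduces to checking that property (1) cannot be satisfied because $(\ad x_1)^2$ is not inner in the required way.

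For the orbit computation I would set up, in each case, the automorphism group as matrices preserving the bracket relations, then write down the conjugation action $A\varphi A^{-1}$ on a generic $[p]$-map $\varphi$ and reduce to canonical form. In the semilinear cases with $\char\F>$ class, the relevant invariant is essentially the rank and the chain structure of $\varphi$ viewed as a semilinear map into the center, which accounts for the ``flag'' patterns such as $\res{x_1}=x_2,\ \res{x_2}=x_3,\ldots$; perfectness of $\F$ enters here because it makes the Frobenius $\delta\mapsto\delta^p$ surjective, so that scalar normalizations $\res{x}\mapsto\lambda^p\res{x}$ can absorb arbitrary nonzero scalars. The residual parameters $\xi$ and $(\alpha,\beta)$ appear precisely when the automorphism group is too small to transitively normalize a one-parameter family: I would compute the stabilizer-induced equivalence and identify the quotient, recovering the Artin–Schreier subspace $\K$ in characteristic $2$ (from the additive identity $\ress{(a+b)}=\ress a+\ress b+[a,b]$, whose cross term $\delta+\delta^2$ generates $\K$), the square-class condition on $\xi_1\xi_2^{-1}$ in $(4/3)$ characteristic $\geq 5$, and the subspace $\K_\beta$ together with the mixed condition $\alpha_1\pm\alpha_2\sqrt{\beta_2/\beta_1}\in\K_{\beta_1}$ in characteristic $3$.

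The main obstacle I expect is the characteristic $3$ analysis of $(4/3)$. Here the nilpotency class is $3=p$, so the $[p]$-map fails to be semilinear and the $s_i$ correction terms are genuinely active; simultaneously the automorphism group acts with a two-parameter orbit space, forcing the delicate invariant $\alpha_1\pm\alpha_2\sqrt{\beta_2/\beta_1}\in\K_{\beta_1}$. Extracting this condition requires carefully tracking how a generic automorphism transforms the pair $(\alpha,\beta)$ through both the linear action on the basis and the nonlinear $s_i$-contributions, then quotienting by the resulting equivalence — the appearance of the square root reflecting that only the \emph{square class} of $\beta$ is an invariant, and the Artin–Schreier-type subspace $\K_\beta$ reflecting the residual additive ambiguity. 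The remaining cases are comparatively mechanical once the automorphism groups are recorded, so I would present those briskly and devote the bulk of the argument to isolating these arithmetic invariants over the non-algebraically-closed perfect field $\F$.
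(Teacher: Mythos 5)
Your proposal follows essentially the same route as the paper: fix each nilpotent Lie algebra of dimension at most $4$, determine the admissible $[p]$-maps (semilinear into the center when the class is below $p$, otherwise via the axioms and the $s_i$-terms), write down $\aut L$ as explicit matrices, and compute the orbits of the conjugation action, with the Artin--Schreier space $\K$, the square-class condition, and $\K_\beta$ arising exactly as you describe from the residual stabilizer action. The outline is correct and identifies the genuinely delicate cases (characteristic $2$ and the characteristic $3$ analysis of $L_{4,3}$) in the same way the paper does.
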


Through out the paper, we assume that $\F$ is a perfect field  of characteristic $p$ and unless otherwise stated $p\geq 3$.
Thus, the Frobenius automorphism of $\F$ given by   $x\mapsto x^p$ is invertible and we denote the inverse image of $x$ by $x^{1/p}$.

\section{Abelian Lie algebras}

In this section we classify abelian $[p]$-nilpotent restricted Lie 
algebras. In dimension~1, the only $[p]$-nilpotent restricted 
Lie algebra is $\left<x\mid \res x=0\right>$. 
Suppose that $\dim L=2$.
It is not hard to see that there are two possible $[p]$-maps on $L$ that result
in
abelian $[p]$-nilpotent Lie algebras. More precisely,
 there exist linearly independent elements $x_1, x_2\in L$ 
such that either $\res x_1=\res x_2=0$ or $\res x_1=x_2$ and $\res x_2=0$.

Suppose now that $L$ is abelian with $\dim L=3$ and 
let $\{x_1, x_2, x_3\}$ be a basis of $L$. Since $L$ is $[p]$-nilpotent, without loss of generality we assume that $\res x_3=0$.  Let $H=L/\la x_3\ra$. Since $\dim H=2$,  we may assume by the
argument of the first paragraph of the section that either 
$\res x_1, \res x_2\in \la x_3\ra$ or $\res x_1-x_2, \res x_2\in \la x_3\ra$. 

First consider the case $\res x_1, \res x_2\in \la x_3\ra$. Hence $\res x_1=\alpha x_3$ and $\res x_2=\beta x_3$, for some $\alpha, \beta\in \F$. If $\alpha\neq 0$ then we rescale  $x_1$ so that 
$\res x_1= x_3$. Similarly, if  $\beta\neq 0$ then we rescale  $x_2$ so that 
$\res x_2= x_3$. If $\res x_1= x_3$ and $\res x_2= x_3$ then $(-x_1+x_2)^{[p]}= 0$. In this case, we replace $x_2$ with $-x_1+x_2$
to obtain $\res x_2=0$.  
Thus 
we conclude that, up to isomorphism, the possible $[p]$-maps on $L$ are as follows:
\begin{align*}
&\res x_1=\res x_2=\res x_3=0;\\
&\res x_1=x_3,\  \res x_2=\res x_3=0.
\end{align*}

Now consider the case that $\res x_1-x_2, \res x_2\in \la x_3\ra$. 
Hence $\res x_1=x_2+\alpha x_3$ and $\res x_2=\beta x_3$, for some $\alpha, \beta\in \F$.
We replace $x_2$ with $x_2+\alpha x_3$ to obtain $\res x_1=x_2$.
If  $\beta\neq 0$ then we rescale  $x_3$ so that 
$\res x_2= x_3$. We conclude that, up to isomorphism, possible $[p]$-maps on $L$ are as follows:
\begin{align*}
&\res x_1=x_2, \ \res x_2=\res x_3=0;\\
&\res x_1=x_2,\  \res x_2=x_3, \ \res x_3=0.
\end{align*}

The first algeba is isomorphic to 
one of the  algebras above. Thus, 
up to isomorphism,  3-dimensional abelian  $[p]$-nilpotent restricted Lie algebras are as follows:

\begin{eqnarray*}
L_{3,1}^1&=&\left<x_1, x_2, x_3\mid \res x_1=\res x_2=\res x_3=0\right>;\\
L_{3,1}^2&=&\left<x_1, x_2, x_3\mid \res x_1=x_2,\  \res x_2=\res x_3=0\right>;\\
L_{3,1}^3&=&\left<x_1, x_2, x_3\mid \res x_1=x_2,\  \res x_2=x_3, \ \res x_3=0\right>.
\end{eqnarray*}

Let $x_1, x_2, x_3, x_4$ be a basis of a 
4-dimensional abelian  Lie algebra $L=L_{4,1}$. Since $L$ is $[p]$-nilpotent, without loss of generality we assume that $\res x_4=0$.
We set $H=L/\la x_4\ra$ and perform similar calculations as in the 
lower-dimensional cases
to show that, up to isomorphism, 4-dimensional abelian $[p]$-nilpotent restricted Lie algebras are as follows:

\begin{eqnarray*}
L_{4,1}^1&=&\left<x_1, x_2, x_3, x_4\mid \res x_1=\res x_2=\res x_3=\res x_4=0\right>;\\
L_{4,1}^2&=&\left<x_1, x_2, x_3, x_4\mid \res x_1=x_2,\  \res x_2=\res x_3=\res x_4=0\right>;\\
L_{4,1}^3&=&\left<x_1, x_2, x_3, x_4\mid \res x_1=x_2,\  \res x_3= x_4,\ \res x_2=\res x_4=0\right>;\\
L_{4,1}^4&=&\left<x_1, x_2, x_3, x_4\mid \res x_1=x_2,\  \res x_2=x_3, \ \res x_3=\res x_4=0\right>;\\
L_{4,1}^5&=&\left<x_1, x_2, x_3, x_4\mid \res x_1=x_2,\  \res x_2=x_3, \ \res x_3=x_4, \ \res x_4=0\right>.
\end{eqnarray*}

\section{The Heisenberg Lie algebra}

Consider the Heisenberg Lie algebra
$$
L=\left<x_1,\ x_2,\ x_3\mid [x_1, x_2]=x_3\right>.
$$ 
Since $(\ad x)^p=0$ for all $p\geq 2$, 
the image of a $[p]$-map on $L$ is in $\centr L$.
Let $\varphi:L\rightarrow \centr L$ be a $[p]$-map from 
$L$ to $\centr L=\la x_3\ra$. 
Then $\varphi$ can be described by the images of $x_1$, $x_2$, and $x_3$. The 
fact that $(L,\varphi)$ is $[p]$-nilpotent implies that $x_3\varphi=0$. Hence 
$\varphi$ is described by a vector $(\alpha,\beta)$ where
$$
x_1\varphi=\alpha x_3\quad\mbox{and}\quad x_2\varphi=\beta x_3.
$$
The automorphism group of $L$, acting on row vectors 
with respect to the given basis,
consists of the invertible $3\times 3$-matrices of the form
$$
\begin{pmatrix}
a_{11} & a_{12} & a_{13}\\
a_{21} & a_{22} & a_{23}\\
0 & 0 & d
\end{pmatrix}
$$
where $d=a_{11}a_{22}-a_{12}a_{21}$. 
Let $A$ be an automorphism as above and let us compute the 
vector $(\alpha',\beta')$ that describes $A\varphi A^{-1}$.

\subsection{Odd characteristic}
Let us first assume that $p\geq 3$. In this case the $[p]$-map $\varphi$ is
semilinear. Therefore
$$
x_1 A\varphi A^{-1}=(a_{11}x_1+a_{12}x_2+a_{13}x_3)\varphi A^{-1}=
(a_{11}^p\alpha+a_{12}^p\beta) x_3 A^{-1}=(a_{11}^p\alpha+a_{12}^p\beta)d^{-1}x_3.
$$
Hence $\alpha'=(a_{11}^p\alpha+a_{12}^p\beta)d^{-1}$ and we obtain similarly 
that $\beta'=(a_{21}^p\alpha+a_{22}^p\beta)d^{-1}$. 
We claim in this case that $L$
must be isomorphic to one of the 
following algebras.

\begin{eqnarray*}
L_{3,2}^1&=&\left<x_1,\ x_2,\ x_3\mid [x_1, x_2]=x_3,\ \res x_1=\res x_2=\res x_3=0\right>;\\
L_{3,2}^2&=&\left<x_1,\ x_2,\ x_3\mid [x_1, x_2]=x_3,\ \res x_1=x_3,\ \res x_2=\res x_3=0\right>.\\
\end{eqnarray*}
The algebras $L_{3,2}^1$ and $L_{3,2}^2$ are clearly non-isomorphic, 
as $\res{(L_{3,2}^1)}=0$ while $\res{(L_{3,2}^2)}=\left<x_3\right>$. 

Let $\varphi:L\rightarrow \centr L$ be a semilinear transformation.
Then  $x_3\varphi=0, x_1\varphi=\alpha x_3$, and $ x_2\varphi=\beta x_3$, for some $\alpha, \beta\in \F$. If $\alpha=\beta=0$ then we have  $L_{3,2}^1$. Suppose that
this is not the case and assume without loss of generality that
$\alpha\neq 0$.  Note that $(c x_1+x_2)\varphi=(c^p\alpha+\beta)x_3$, for every $c\in \F$. Hence choosing $c_0=-(\beta/\alpha)^{1/p}$ and
$x_2'=c_0x_1+x_2$, we obtain $x_2'\varphi=0$. Then replacing  $x_3$ with $\alpha x_3$ and $x_2$ with $\alpha x_2$ we find that $x_1\varphi=x_3$
and hence we obtain $L_{3,2}^2$.

\subsection{Characteristic 2}\label{HeiChar2}
Suppose that $L$ is the Heisenberg Lie
algebra over a field $\F$ of characteristic 2. In this section we classify the possible $[p]$-nilpotent
$[p]$-maps on  $L$.
First we set 
\begin{equation}\label{kdef}
\K=\left\{\delta+\delta^2\mid\delta\in\F\right\}.
\end{equation}
As the characteristic is 2,
$\K$ is an $\F_2$-subspace of $\F$. In fact,
$\K$ is the image of the $\F_2$-linear map $\delta\mapsto \delta+\delta^2$
whose kernel is $\F_2$. Hence if $\F$ is a finite field, $\K$ has 
co-dimension~1, but this may not be true for other fields. For instance
if $\F$ is algebraically closed, then the polynomial $t^2+t+\alpha$ has a 
root for any $\alpha\in\F$ and so $\K=\F$. The subspace $\K$ is often referred to
as the Artin-Schreier subspace; see for instance~\cite[page~70]{Cohen}.

We claim that $L$ is isomorphic to a Lie algebra of the form
$$
K_{3,2}^1(\xi)=\left<x_1,x_2,x_3\mid [x_1,x_2]=x_3,\ \ress x_1=x_3,\ \ress x_2=\xi x_3,\ \ress x_3=0\right>,
$$
where $\xi\in\F$. 
Further, $K_{3,2}^1(\xi_1)\cong K_{3,2}^1(\xi_2)$ 
if and only if $\xi_1+\xi_2\in \K$. 

 Note that the map $\varphi:L\rightarrow L$ defined by $x\mapsto \ress x$ is not
semilinear. However, since $0=(\ad x_1)^2=\ad x_1^{[2]}$ and $0=(\ad x_2)^2=\ad
x_2^{[2]}$, we have that $\varphi:L\rightarrow \centr L$. Suppose that 
$\varphi$ is the $[p]$-map represented by the vector $(\alpha,\beta)$ as
in the odd characteristic case. Then computing $A\varphi  A^{-1}$ 
for an automorphism $A=(a_{i,j})$ we obtain
that $A\varphi  A^{-1}$ is represented by the vector $(\alpha',\beta')$
where
\begin{eqnarray*}
\alpha'&=&d^{-1}(\alpha a_{11}^2+\beta a_{12}^2+a_{11}a_{12})\\
\beta'&=&d^{-1}(\alpha a_{21}^2+\beta a_{22}^2+a_{21}a_{22})
\end{eqnarray*}
where $d=a_{11}a_{22}+a_{12}a_{21}$. 
First we show that every Lie algebra is isomorphic to $L_{3,2}^1(\xi)$ with
some $\xi\in\F$. If $\alpha=\beta=0$, then replace $x_1$ with $x_1+x_2$
to obtain $\ress x_1=x_3$. If $\alpha\neq 0$ then replace $x_2$ and $x_3$ 
with $\alpha x_2$ and $\alpha x_3$, respectively, to obtain $\ress{x_1}=x_3$. 
If $\alpha=0$, but $\beta\neq 0$, then swap $x_1$ and $x_2$ and repeat
the steps in the previous sentence. Hence 
$L$ is isomorphic to $L_{3,2}^1(\xi)$ with some $\xi\in\F$, as claimed.

Let us now show the claim concerning the isomorphisms between the algebras
$K_{3,2}^1(\xi)$. Suppose, for $i=1,\ 2$, that $\varphi_i$ 
is represented by the vector $(1,\xi_i)$.
First if $\xi_1+\xi_2\in \K$, then
choose $\delta\in\F$ such that $\xi_1+\delta^2+\delta=\xi_2$ and consider 
the automorphism $A$ with $a_{11}=1$, $a_{12}=0$, $a_{21}=\delta$ and $a_{22}=1$. 
Then $A\varphi_1 A^{-1}=\varphi_2$. Therefore $L_{3,2}^1(\xi_1)\cong 
L_{3,2}^1(\xi_2)$. 

Conversely, suppose that $K_{3,2}^1(\xi_1)\cong K_{3,2}^1(\xi_2)$, that is, 
there is some $A\in \aut L$ such that $A \varphi_1 A^{-1}=\varphi_2$. 
First we note that diagonal automorphisms of the form $\diag{a,a,a^2}$ stabilize
$\varphi_1$ for all $a\in\F^*$. Suppose that $a_{22}=0$. Let $d=a_{11}a_{22}+a_{12}a_{21}=a_{12}a_{21}$.
Then swapping $A$ with $\diag{d^{-1/2},d^{-1/2},d^{-1}}A$, we
may, and will, assume that $a_{12}a_{21}=d=1$ and hence $a_{21}=a_{12}^{-1}$.  
Thus $A\varphi_1A^{-1}=\varphi_2$ implies that
\begin{eqnarray*}
a_{11}^2+\xi_1 a_{12}^2+a_{11}a_{12}&=&1;\\
a_{12}^{-2}&=&\xi_2.
\end{eqnarray*}
Combining these equations, we find $\xi_1+\xi_2=(a_{11}/a_{12})^2+a_{11}/a_{12}\in \K$,
as required.

Assume now $a_{22}\neq 0$ and  $d=a_{11}a_{22}+a_{12}a_{21}$.
We may suppose, as above, that $d=a_{11}a_{22}+a_{12}a_{21}=1$. Thus 
we obtain
\begin{eqnarray}
\label{eq1}a_{11}^2+\xi_1a_{12}^2+a_{11}a_{12}&=&1;\\
\label{eq2}a_{11}a_{22}+a_{12}a_{21}&=&1;\\
\label{eq3}a_{21}^2+\xi_1a_{22}^2+a_{21}a_{22}&=&\xi_2.
\end{eqnarray}
Equation~\eqref{eq2} implies that $a_{11}=(1+a_{12}a_{21})/a_{22}$ which we 
substitute into equation~\eqref{eq1} to obtain that
$$
1+a_{12}^2a_{21}^2+\xi_1a_{12}^2a_{22}^2+a_{22}a_{12}+a_{22}a_{12}^2a_{21}=a_{22}^2
$$
and hence 
\begin{equation}\label{eq4}
a_{22}^2=1+a_{12}^2(a_{21}^2+\xi_1a_{22}^2+a_{22}a_{21})+a_{22}a_{12}=1+a_{12}^2\xi_2+a_{22}a_{12}.
\end{equation}
If $a_{12}=0$ then this implies that $a_{22}=1$, and then equation~\eqref{eq3} 
gives that $\xi_1+\xi_2\in \K$ as required. If $a_{12}\neq 0$, then 
equation~\eqref{eq4} gives that $\xi_2+a_{12}^{-2}\in \K$. On the other hand, 
in this case, equation~\eqref{eq1} gives that $\xi_1+a_{12}^{-2}\in \K$ and hence
$\xi_1+\xi_2\in \K$, as claimed.

\section{Restriction maps on $L_{4,2}$}

Consider  the Lie algebra
$$
L=L_{4,2}=\left<x_1, x_2,x_3,x_4\mid [x_1, x_2]=x_3\right>.
$$
 The automorphism group of $L$ consists of the set of invertible matrices of the
form
$$
\begin{pmatrix}
a_{11} & a_{12} & a_{13} & a_{14}\\
a_{21} & a_{22} & a_{23} & a_{24}\\
0 & 0 & d & 0\\
0 & 0 & a_{43} & a_{44}\\
\end{pmatrix},
$$
where $d=a_{11}a_{22}-a_{12}a_{21}$. 
Note that $\centr L=\left<x_3,x_4\right>$ has dimension 2.  Since $L$ 
is $[p]$-nilpotent, we have 
$$
\centr L^{[p]^2} <\centr L^{[p]} < \centr L.
$$
In particular, $\centr L^{[p]^2} =0$.
Since the nilpotency class of $L$ is two,  
a restricted Lie algebra structure on 
$L$ is given by a semilinear
transformation  from $L$ to $\centr L$ whenever $p\geq 3$.

First we sort out the possible restrictions of a $[p]$-map on $\centr L$, for every $p\geq 2$. 
Suppose that $\res x_3\neq 0$. Then $\res x_3=\alpha x_3+\beta x_4$. If $\beta=0$, 
then $\res x_3=\alpha x_3$, and so $x_3^{[p]^n}\neq 0$ which contradicts 
the assumption that $L$ is  $[p]$-nilpotent. Hence $\beta\neq 0$ and 
we may replace $x_4$ with $x_4'=\alpha x_3+\beta x_4$. This
replacement is an automorphism of $L$  and then we get $\res x_3=x_4$. Now $\res x_4=\gamma x_3+\delta x_4$, which gives
that 
$$
0=x_4^{[p]^2}=\res{(\gamma x_3+\delta x_4)}=
(\gamma^p+\delta^p\delta)x_4+\delta^p\gamma x_3.
$$
Hence, $\delta=\gamma=0$ and so $\res x_4=0$. 

If $\res x_3=0$ and $\res x_4\neq 0$, then $\res x_4=\alpha x_3+\beta x_4$. We have:
$$
0=x_4^{[p]^2}=\res{(\alpha x_3+\beta x_4)}=\beta^p\res x_4
$$
which shows that $\beta=0$ and hence $\res x_4=\alpha x_3$. Replacing $x_4$ with
$(1/\alpha)^{1/p}x_4$ is a Lie algebra automorphism and  results in
$\res x_4=x_3$. 

Thus, the map $\varphi$ on $\centr L$ can have three different 
forms: either $\res x_3=\res x_4=0$; or $\res x_3=x_4$ and $\res x_4=0$;
or $\res x_3=0$ and $\res x_4=x_3$. We will consider these three 
cases separately.

\subsection{}\label{case1}
Suppose first that $\res x_3=\res x_4=0$ and let $\varphi$ be a semilinear 
map that extends this $[p]$-map to the whole $L$. Let 
 $x_1\varphi=\alpha_1 x_3+\beta_1 x_4$ and
$x_2\varphi=\alpha_2 x_3+\beta_2 x_4$. Let $A\in\aut L$ and 
consider the semilinear transformation $\varphi'=A\varphi A^{-1}$ given by
$x_1\varphi'=\alpha_1' x_3+\beta_1' x_4$ and
$x_2\varphi'=\alpha_2'x_3+\beta_2' x_4$. Let us compute the coefficients
$\alpha_1',\ \alpha_2',\ \beta_1',\ \beta_2'$. First, we note that 
$$
x_3A^{-1}=d^{-1}x_3\quad\mbox{and}\quad
x_4A^{-1}=-a_{43}/(da_{44})x_3+a_{44}^{-1}x_4.
$$
Now
\begin{align*}
x_1A\varphi A^{-1}&=(a_{11}x_1+a_{12}x_2+a_{13}x_3+a_{14}x_4)\varphi A^{-1}\\
&=((\alpha_1a_{11}^p+\alpha_2a_{12}^p)x_3+(\beta_1a_{11}^p+\beta_2a_{12}^p)x_4)A^{-1}\\
&=
\bigg(d^{-1}(\alpha_1a_{11}^p+\alpha_2a_{12}^p)-\frac{a_{43}}{da_{44}}(\beta_1a_{11}^p+\beta_2a_{12}^p)\bigg)x_3+
a_{44}^{-1}(\beta_1a_{11}^p+\beta_2a_{12}^p)x_4.
\end{align*}
Repeating the calculation for  $x_2A\varphi A^{-1}$, we obtain that
\begin{eqnarray*}
\alpha_1'&=&d^{-1}(\alpha_1a_{11}^p+\alpha_2a_{12}^p)-\frac{a_{43}}{da_{44}}(\beta_1a_{11}^p+\beta_2a_{12}^p)\\
\alpha_2'&=&d^{-1}(\alpha_1a_{21}^p+\alpha_2a_{22}^p)-\frac{a_{43}}{da_{44}}(\beta_1a_{21}^p+\beta_2a_{22}^p)\\
\beta_1'&=&a_{44}^{-1}(\beta_1a_{11}^p+\beta_2a_{12}^p)\\
\beta_2'&=&a_{44}^{-1}(\beta_1a_{21}^p+\beta_2a_{22}^p).
\end{eqnarray*}
Thus
\begin{align}\label{aut}
(\alpha_1',\alpha_2',\beta_1',\beta_2')=
(\alpha_1,\alpha_2,\beta_1,\beta_2)
\begin{pmatrix}
d^{-1}a_{11}^p & d^{-1}a_{21}^p & 0 & 0\\
d^{-1}a_{12}^p & d^{-1}a_{22}^p & 0 & 0\\
-\frac{a_{43}}{da_{44}}a_{11}^p & -\frac{a_{43}}{da_{44}}a_{21}^p & a_{44}^{-1}a_{11}^p & a_{44}^{-1}a_{21}^p\\
-\frac{a_{43}}{da_{44}}a_{12}^p & -\frac{a_{43}}{da_{44}}a_{22}^p & a_{44}^{-1}a_{12}^p & a_{44}^{-1}a_{22}^p
\end{pmatrix}.
\end{align}
Hence, the action of  the automorphism in \eqref{aut}  on the vector space $\F^4$ can be described by the
tensor product
\begin{align}
\begin{pmatrix}\label{tensor}
d^{-1} & 0 \\
-\frac{a_{43}}{da_{44}} & a_{44}^{-1}
\end{pmatrix}
\otimes\begin{pmatrix}
a_{11}^p & a_{21}^p\\
a_{12}^p & a_{22}^p
\end{pmatrix}
\end{align}
acting on the tensor product  $V_1\otimes V_2$, where $V_1=V_2=\F^2$. Let us 
calculate the orbits under this action.   We denote the group of matrices of the form \eqref{tensor} by $H$. Let $e_1, e_2$ and $f_1, f_2$ be the standard bases of $V_1$ and $V_2$, respectively.

Let $W=V_1\otimes V_2$ and let $v\in W\setminus 0$. First suppose that
$v=v_1\otimes v_2$ with $v_1\in V_1\setminus 0$ and $v_2\in V_2\setminus 0$. 
Since $\gl 2\F$ is transitive on the non-zero vectors of $V_2$, there exists $g_2\in \gl 2\F$ such that $v_2g_2=(1,0)$. Choose
$g_1$ such that $g_1\otimes g_2\in H$. Then 
$$
(v_1\otimes v_2)(g_1\otimes g_2)=v_1'\otimes (1,0).
$$
Let us now consider vectors of the form $(\alpha,\beta)\otimes (1,0)$. If
$\beta=0$ we have
$$
\left((\alpha,0)\otimes (1,0)\right)\left(\begin{pmatrix}
\alpha^{-1} & 0 \\
0 & 1
\end{pmatrix}
\otimes\begin{pmatrix}
1 & 0\\
0 & \alpha^p
\end{pmatrix}\right)
=(1,0)\otimes(1,0).
$$
If $\beta\neq 0$ then 
$$
\left((\alpha,\beta)\otimes (1,0)\right)\left(\begin{pmatrix}
1 & 0 \\
-\alpha/\beta & \beta^{-1}
\end{pmatrix}
\otimes\begin{pmatrix}
1 & 0\\
0 & 1
\end{pmatrix}\right)
=(0,1)\otimes(1,0).
$$
We deduce  that the group $H$ has three orbits on the set of 
pure tensors with orbit representatives $0,(1,0)\otimes (1,0)$ and 
$(0,1)\otimes (1,0)$. 

Let us compute the orbits of $H$ on the set of elements 
that are not pure tensors. Such an element is of the form 
$e_1\otimes v_1+e_2\otimes v_2$ with $v_1$ and $v_2$ linearly independent.
Note that there exists a $g^p\in \gl 2\F$ that maps $v_1\mapsto f_1$
and $v_2\mapsto f_2$. Let $d=\det g$. Then 
$$
(e_1\otimes v_1+e_2\otimes v_2)
\left(\begin{pmatrix}
d & 0 \\
0 & 1
\end{pmatrix}
\otimes g\right)=d e_1\otimes f_1+e_2\otimes f_2.
$$
Hence every orbit contains an element of the form 
$\alpha e_1\otimes f_1+e_2\otimes f_2$ with $\alpha\in\F^*$. Now 
$$
(\alpha e_1\otimes f_1+e_2\otimes f_2)\left(
\begin{pmatrix}
\alpha^{-1} & 0 \\
0 & \alpha^{-p}
\end{pmatrix}
\otimes\begin{pmatrix}
1 & 0\\
0 & \alpha^p
\end{pmatrix}\right)
=e_1\otimes f_1+e_2\otimes f_2.
$$

Hence, $H$ has 4 orbits  and 
the representatives of these orbits are $0$, $(1,0)\otimes (1,0)$, 
$(0,1)\otimes (1,0)$, and $(1,0)\otimes (1,0)+(0,1)\otimes (0,1)$. 
The corresponding restricted Lie algebras are
\begin{eqnarray*}
L_{4,2}^1&=&\left<x_1,x_2,x_3,x_4\mid [x_1,x_2]=x_3\right>;\\
L_{4,2}^2&=&\left<x_1,x_2,x_3,x_4\mid [x_1,x_2]=x_3,\ \res x_1=x_3\right>;\\
L_{4,2}^3&=&\left<x_1,x_2,x_3,x_4\mid [x_1,x_2]=x_3,\ \res x_1=x_4\right>;\\
L_{4,2}^4&=&\left<x_1,x_2,x_3,x_4\mid [x_1,x_2]=x_3,\ \res x_1=x_3,\ \res x_2=x_4\right>.
\end{eqnarray*}

\subsubsection{Characteristic $2$}\label{l422ch2}
Let us now assume that $\ress x_3=\ress x_4=0$ and $\char\F=2$.
In this case we will show that 
$L$ is isomorphic to one of the following Lie algebras:
\begin{eqnarray*}
K_{4,2}^1(\xi)&=&\left<x_1,x_2,x_3,x_4\mid [x_1,x_2]=x_3,\ \ress x_1=x_3,\ 
\ress x_2=\xi x_3\right>;\\
K_{4,2}^2&=&\left<x_1,x_2,x_3,x_4\mid [x_1,x_2]=x_3,\ \ress x_1=x_4\right>;\\
K_{4,2}^3&=&\left<x_1,x_2,x_3,x_4\mid [x_1,x_2]=x_3,\ \ress x_1=x_3,\ 
\ress x_2=x_4\right>.
\end{eqnarray*}
We claim further that $K_{4,2}^1(\xi_1)\cong K_{4,2}^1(\xi_2)$ if and only if
$\xi_1+\xi_2\in \K$ where $\K$ is the Artin-Schreier subspace defined
in~\eqref{kdef}.

First assume that $\ress L\leq \left<x_3\right>$. In this case 
$L=L_1\oplus\left<x_4\right>$ where $L_1=\left<x_1,x_2,x_3\right>$ and 
the 
direct sum is interpreted as a direct sum of restricted 
ideals. Hence $L$ is isomorphic
to an algebra of the form $K_{3,2}^1(\xi)\oplus\F$ where $K_{3,2}^1(\xi)$ is an 
algebra defined in Section~\ref{HeiChar2}. Thus $L\cong K_{4,2}^1(\xi)$ with
some $\xi\in\F$. 

Hence we may assume that $\ress L\not\leq\left<x_3\right>$. 
Assume that $\ress x_1=\alpha_1 x_3+\beta_1 x_4$ and that 
$\ress x_2=\alpha_2 x_3+\beta_2 x_4$. 
Either by swapping $x_1$ and $x_2$ or replacing $x_1$ with $x_1+x_2$, 
we may assume without loss of generality that $\ress x_1\neq 0$. 
Assume first that $\ress x_2=0$. 
As $\ress L\not\leq\left<x_3\right>$,  we must have $\beta_1\neq 0$.
Then replace $x_4$ with $\alpha_1 x_3+\beta_1 x_4$ to obtain that 
$L\cong K_{4,2}^2$. 

Suppose now that $\ress x_1\neq 0$ and $\ress x_2\neq 0$. If $\beta_2\neq 0$, then
we replace $x_4$ with $\alpha_2 x_3+\beta_2 x_4$ and obtain that 
$\ress x_2=x_4$. Now $(x_1+\beta_1^{1/2}x_2)^{[2]}=\alpha_1 x_3$. 
Hence replacing $x_1$ with $x_1+\beta_1^{1/2}x_2$ we may assume that 
$\ress x_1=\alpha_1 x_3$ with some $\alpha_1\in\F^*$. 
Now replace $x_2$ by $\alpha_1 x_2$, $x_3$ by 
$\alpha_1x_3$ and $x_4$ by $\alpha_1^2x_4$ to obtain that $\ress x_1=x_3$ and
$\ress x_2=x_4$ and hence $L\cong K_{4,2}^3$. If $\beta_1\neq 0$ then 
swapping $x_1$ and $x_2$ allows us to repeat this argument.

These Lie algebras are pairwise non-isomorphic,  as 
$K_{4,2}^1(\xi)$ 
are the only ones with $\ress L=L'$, $K_{4,2}^2$ is the only one with
$\ress L\cap L'=0$, while $K_{4,2}^3$ is the only one
with $L'<\ress L$. The claim concerning the isomorphisms among the algebras
$K_{4,2}^1(\xi)$ follows from the fact that if $L$ is such an algebra and $I$
is a one-dimensional ideal, then $I\leq \centr L=\left<x_3,x_4\right>$, and so
$L/I$ is either abelian (if and only if $I=\left<x_3\right>$) or is isomorphic
to $K_{3,2}^1(\xi)$ in Section~\ref{HeiChar2}.

\subsection{}\label{case2}
Let us  now consider the case when $\res x_3=x_4$ and $\res x_4=0$.
First we note that if $A$ is an automorphism of $L$ then $A$ preserves 
$\varphi|_{\centr L}$ if and only if $a_{43}=0$ and $d^p=a_{44}$.
Let  $x_1\varphi=\alpha_1 x_3+\beta_1 x_4$ and
$x_2\varphi=\alpha_2x_3+\beta_2 x_4$.
Now consider the semilinear transformation $\varphi'=A\varphi A^{-1}$ given by
$x_1\varphi'=\alpha_1' x_3+\beta_1' x_4$ and
$x_2\varphi'=\alpha_2'x_3+\beta_2' x_4$.
\subsubsection{Odd characteristic} 
Similar  calculations as in Section \ref{case1} show that
\begin{eqnarray*}
\alpha_1'&=&d^{-1}(\alpha_1a_{11}^p+\alpha_2a_{12}^p)\\
\alpha_2'&=&d^{-1}(\alpha_1a_{21}^p+\alpha_2a_{22}^p)\\
\beta_1'&=&a_{44}^{-1}(\beta_1a_{11}^p+\beta_2a_{12}^p+a_{13}^p)\\
\beta_2'&=&a_{44}^{-1}(\beta_1a_{21}^p+\beta_2a_{22}^p+a_{23}^p)
\end{eqnarray*}
Let us write $$
(\alpha_1,\alpha_2,\beta_1,\beta_2)(A\varrho )=(\alpha_1',\alpha_2',\beta_1',\beta_2')
$$
Thus, in matrix form we have:
\begin{equation*}
(\alpha_1,\alpha_2,\beta_1,\beta_2)(A\varrho )=(\alpha_1,\alpha_2,\beta_1,\beta_2)
\begin{pmatrix}
d^{-1}a_{11}^p &d^{-1}a_{21}^p &0& 0\\
d^{-1}a_{12}^p&d^{-1}a_{22}^p & 0 & 0\\
0& 0&a_{44}^{-1}a_{11}^p& a_{44}^{-1}a_{21}^p \\
0& 0&a_{44}^{-1}a_{12}^p&a_{44}^{-1}a_{22}^p \\
\end{pmatrix}+
\begin{pmatrix}
0\\
0\\
a_{44}^{-1}a_{13}^p\\
a_{44}^{-1}a_{23}^p
\end{pmatrix}.
\end{equation*}

We claim that there are two orbits with representatives $(0,0,0,0)$ and 
$(0,1,0,0)$.
First notice that 
$$
(\alpha_1,\alpha_2,\beta_1,\beta_2)
\begin{pmatrix}
1 & 0 & a_{13} & 0\\
0 & 1 & a_{23} & 0\\
0 & 0 & 1 & 0\\
0 & 0 & 0 & 1
\end{pmatrix}\varrho
=(\alpha_1,\alpha_2,\beta_1+a_{13}^p,\beta_2+a_{23}^p).
$$
Since $a_{13}$ and $a_{23}$ can be chosen freely, 
$(\alpha_1, \alpha_2,\beta_1,\beta_2)$ and $(\alpha_1,\alpha_2,\beta_3,\beta_4)$
are in the same orbit for all $\beta_1,\beta_2,\beta_3,\beta_4\in\F$.
Let $(\alpha_1,\alpha_2,\beta_1,\beta_2)\in\F^4$. If $\alpha_1=\alpha_2=0$, 
then the set of such elements would form a single orbit with orbit 
representative $(0,0,0,0)$. 
Supose now that 
$(\alpha_1,\alpha_2)\neq (0,0)$. We may assume without loss of generality 
that $\beta_1=\beta_2=0$. 
If $\alpha_2\neq 0$ then
$$
(\alpha_1,\alpha_2, 0,0)\begin{pmatrix}
\alpha_2^{1/p} & -\alpha_1^{1/p} & 0 & 0\\
0 & \alpha_2^{-1/p} & 0 & 0\\
0 & 0 & 1 & 0\\
0 & 0 & 0 & 1
\end{pmatrix}\varrho=(0,1,0,0).
$$
If $\alpha_2=0$ then $\alpha_1\neq 0$ and we obtain that
$$
(\alpha_1,0,0,0)\begin{pmatrix}
0 & -\alpha_1^{1/p} & 0 & 0\\
\alpha_1^{-1/p} & 0 & 0 & 0\\
0 & 0 & 1 & 0\\
0 & 0 & 0 & 1
\end{pmatrix}\varrho=(0,1,0,0).
$$

Hence, there are two orbits with  representatives $(0,0,0,0)$ and 
$(0,1,0,0)$ as claimed. The corresponding Lie algebras are
\begin{eqnarray*}
L_{4,2}^5&=&\left<x_1,x_2,x_3,x_4 \mid [x_1,x_2]=x_3,\ \res x_3=x_4 \right>;\\
L_{4,2}^6&=&\left<x_1,x_2,x_3,x_4 \mid [x_1,x_2]=x_3,\  \res x_2=x_3,\ \res x_3=x_4 \right>.
\end{eqnarray*}

\subsubsection{Characteristic $2$}
Suppose now that $\char\F=2$. 
Suppose that $\K$ and $\xi$ are as in Section~\ref{HeiChar2}. 
We claim that $L$ is isomorphic to 
\begin{eqnarray*}
K_{4,2}^4(\xi)&=&\left<x_1,x_2,x_3,x_4 \mid [x_1,x_2]=x_3,\ \ress x_3=x_4, 
\ress x_1= x_3,\ \ress x_2=\xi x_3 \right>,
\end{eqnarray*}
where $\xi\in\F$. Further, $K_{4,2}^4(\xi_1)\cong K_{4,2}^4(\xi_2)$ if 
and only if $\xi_1+\xi_2\in\K$ as in Section~\ref{HeiChar2}.
If a $[2]$-map is represented by a vector $(\alpha_1,\alpha_2,\beta_1,\beta_2)$
as above and $A\in\aut L$, 
then $A\varphi A^{-1}$ is represented by 
$(\alpha_1',\alpha_2',\beta_1',\beta_2')$ where
\begin{eqnarray*}
\alpha_1'&=&d^{-1}(\alpha_1a_{11}^p+\alpha_2a_{12}^p+a_{11}a_{12})\\
\alpha_2'&=&d^{-1}(\alpha_1a_{21}^p+\alpha_2a_{22}^p+a_{21}a_{22})\\
\beta_1'&=&a_{44}^{-1}(\beta_1a_{11}^p+\beta_2a_{12}^p+a_{13}^p)\\
\beta_2'&=&a_{44}^{-1}(\beta_1a_{21}^p+\beta_2a_{22}^p+a_{23}^p).
\end{eqnarray*}
Note that the quotient $L/\left<x_4\right>$ is isomorphic to 
the Heisenberg Lie algebra and so by Section~\ref{HeiChar2}, we may assume that
$(\alpha_1,\alpha_2,\beta_1,\beta_2)$ is of the form
$(1,\xi,\beta_1,\beta_2)$. Then we use similar  arguments
as in the case of $\char\F\geq 3$ to show that $(1,\xi,\beta_1,\beta_2)$ and $(1,\xi,0,0)$ are 
in  the same orbit.
This shows that our algebra is isomorphic to $K_{4,2}^4(\xi)$ with 
some $\xi\in \F$. To prove the claim concerning the
isomorphisms among these algebras, notice that $\left<x_4\right>$ is the
unique restricted ideal of $K_{4,2}^4(\xi)$ with dimension 1, and
$K_{4,2}^4(\xi)/\left<x_4\right>\cong K_{3,2}^1(\xi)$. Thus 
$K_{4,2}^4(\xi_1)\cong K_{4,2}^4(\xi_2)$ if and only if 
$K_{3,2}^1(\xi_1)\cong K_{3,2}^1(\xi_2)$ if and only if $\xi_1+\xi_2\in\K$.

\subsection{}
Finally, we consider the case where $\res x_3=0$ and $\res x_4=x_3$.
Let $\varphi$ be a semilinear map
that extends this $[p]$-map to the whole $L$. 
Note that if $A$ is an automorphism of $L$ then $A$ preserves 
$\varphi|_{\centr L}$ if and only if $d=a_{44}^p$.
Let  $x_1\varphi=\alpha_1 x_3+\beta_1 x_4$ and
$x_2\varphi=\alpha_2x_3+\beta_2 x_4$.

\subsubsection{Odd characteristic} 
Let $A\in\aut L$ and 
consider the semilinear transformation $\varphi'=A\varphi A^{-1}$ given by
$x_1\varphi'=\alpha_1' x_3+\beta_1' x_4$ and
$x_2\varphi'=\alpha_2'x_3+\beta_2' x_4$.
Then
\begin{eqnarray*}
\alpha_1'&=&d^{-1}(\alpha_1a_{11}^p+\alpha_2a_{12}^p+a_{14}^p)-a_{43}/(da_{44})(\beta_1a_{11}^p+\beta_2a_{12}^p)\\
\alpha_2'&=&d^{-1}(\alpha_1a_{21}^p+\alpha_2a_{22}^p+a_{24}^p)-a_{43}/(da_{44})(\beta_1a_{21}^p+\beta_2a_{22}^p)\\
\beta_1'&=&a_{44}^{-1}(\beta_1a_{11}^p+\beta_2a_{12}^p)\\
\beta_2'&=&a_{44}^{-1}(\beta_1a_{21}^p+\beta_2a_{22}^p).
\end{eqnarray*}

As usual, we denote this action of $\aut L$ by $\varrho$. 
As in Section \ref{case2}, the vectors $(\alpha_1,\beta_1,\alpha_2,\beta_2)$ and
$(\alpha_3,\beta_1,\alpha_4,\beta_2)$ are 
in the same orbit for all $\alpha_i,\ \beta_i$. Hence elements 
of the form $(\alpha_1,0,\alpha_2,0)$ form a single 
orbit with representative $(0,0,0,0)$. 
Suppose now that $(\beta_1,\beta_2)\neq (0,0)$. Consider the vector
$(\alpha_1,\beta_1,\alpha_2,\beta_2)$. We may assume without loss of generality
that $\alpha_1=\alpha_2=0$. Then if $\beta_2\neq 0$ then
$$
(0,\beta_1,0,\beta_2)\begin{pmatrix}
\beta_2^{1/p} & -\beta_1^{1/p} & 0 & 0\\
0 & \beta_2^{-1/p} & 0 & 0\\
0 & 0 & 1 & 0\\
0 & 0 & 0 & 1
\end{pmatrix}\varrho=(0,0,0,1).
$$
If $\beta_2=0$ then
$$
(0,\beta_1,0,0)
\begin{pmatrix}
0 & -\beta_1^{1/p} & 0 & 0\\
\beta_1^{-1/p} & 0 & 0 & 0\\
0 & 0 & 1 & 0\\
0 & 0 & 0 & 1
\end{pmatrix}\varrho=(0,0,0,1).
$$

Hence, there are two orbits with representatives $(0,0,0,0)$ and 
$(0,0,1,0)$, as claimed. The corresponding Lie algebras are
\begin{eqnarray*}
L_{4,2}^7&=&\left<x_1,x_2,x_3,x_4 \mid [x_1,x_2]=x_3,\ \res x_4=x_3 \right>;\\
L_{4,2}^8&=&\left<x_1,x_2,x_3,x_4 \mid [x_1,x_2]=x_3,\ \res x_4=x_3,\ \res x_2=x_4 \right>.
\end{eqnarray*}

\subsubsection{Characteristic 2}
Then 
$A\varphi A^{-1}$ is represented by $(\alpha_1',\beta_1',\alpha_2',\beta_2')$ 
where
\begin{eqnarray*}
\alpha_1'&=&d^{-1}(\alpha_1a_{11}^2+\alpha_2a_{12}^2+a_{14}^2+a_{11}a_{12})-a_{43}/(da_{44})(\beta_1a_{11}^2+\beta_2a_{12}^2)\\
\alpha_2'&=&d^{-1}(\alpha_1a_{21}^2+\alpha_2a_{22}^2+a_{24}^2+a_{21}a_{22})-a_{43}/(da_{44})(\beta_1a_{21}^2+\beta_2a_{22}^2)\\
\beta_1'&=&a_{44}^{-1}(\beta_1a_{11}^2+\beta_2a_{12}^2)\\
\beta_2'&=&a_{44}^{-1}(\beta_1a_{21}^2+\beta_2a_{22}^2).
\end{eqnarray*}
We claim that $L$ is isomorphic to one of the following algebras:
\begin{eqnarray*}
K_{4,2}^5&=&\left<x_1,x_2,x_3,x_4 \mid [x_1,x_2]=x_3,\ \ress x_4=x_3 \right>;\\
K_{4,2}^{6}&=&\left<x_1,x_2,x_3,x_4 \mid [x_1,x_2]=x_3,\ \ress x_4=x_3,\ \ress x_2=x_4 \right>.
\end{eqnarray*}

Similarly as in the case of $\char\F\geq 3$ we obtain that 
every $\aut L$-orbit contains a tuple of the form $(0,\beta_1,0,\beta_2)$. 
Hence we may assume that $\ress x_1=\beta_1 x_4$ and $\ress x_2=\beta_2 x_4$.
If $\beta_1=\beta_2=0$ then $L$ is isomorphic to 
$K_{4,2}^5$. If $\beta_1= 0$ and $\beta_2\neq 0$, then 
apply the diagonal automorphism $\diag{\beta_2^{1/2},\beta_2^{-1/2},1,1}$ 
to obtain $L_{4,2}^{6}$. If $\beta_1=0$ and $\beta_2\neq 0$, then
we can swap $x_1$ and $x_2$ and repeat the argument. Finally 
if $\beta_1\beta_2\neq 0$, then apply the diagonal automorphism 
$\diag{\beta_1^{-3/4}\beta_2^{-1/4},\beta_1^{-1/4}\beta_2^{-3/4},\beta_1^{-1}\beta_2^{-1},\beta_1^{-1/2}\beta_2^{-1/2}}$~~/~~ 
to obtain 
that $\beta_1=\beta_2=1$. Now replace $x_1$ with $x_1+x_2+x_4$ to obtain 
$K_{4,2}^6$. As $\ress{(K_{4,2}^5)}\leq (K_{4,2}^5)'$, 
while this is not the case with $K_{4,2}^6$,
the two algebras are non-isomorphic.

\section{Restriction maps on  $L_{4,3}$}\label{L43}
Consider the Lie algebra
$$
L=L_{4,3}=\left<x_1,\ x_2,\ x_3,\ x_4\mid [x_1,x_2]=x_3,\ [x_1,x_3]=x_4\right>.
$$
The automorphism group of $L$, with respect to the given basis, 
consists of the invertible matrices of the form
$$
\begin{pmatrix}
a_{11} & a_{12} & a_{13} & a_{14}\\
0 & a_{22} & a_{23} & a_{24}\\
0 & 0 & d_1& a_{11}a_{23}\\
0 & 0 & 0 & a_{11}d_1
\end{pmatrix},
$$
where $d_1=a_{11}a_{22} $.
We have  $\centr L=\left<x_4\right>$. Since $L$ is $[p]$-nilpotent, we must have $\res x_4=0$.
First we note that $L_{4,3}$ is not restrictable in characteristic 2. 
For, a
restriction map in characteristic $2$
would have to satisfy $\ad (\ress x)=(\ad x)^2$ for all $x$. 
On the other hand, we have that $(\ad
x_1)^2$ maps $x_1 \mapsto 0$, $x_2 \mapsto x_4$, $x_3 \mapsto 0$, 
$x_4 \mapsto 0$. However, this map is not
an element of the algebra $\{ \ad x \mid x \in L_{4,3}\}$. Hence we
may assume that $p\geq 3$. Then $(\ad x)^p=0$ for all $x$ and we obtain
that the codomain of the restriction map is contained in the center.
Hence any $[p]$-map  is represented by a vector 
$(\alpha,\beta,\gamma)$ where
$$
\res  x_1=\alpha x_4,\quad \res  x_2=\beta x_4,\quad \res  x_3=\gamma x_4,\quad
\res  x_4=0.
$$

\subsection{Fields of  characteristic  $p\geq 5$}
In this case any $[p]$-map is a semilinear transformation.
Let $\varphi: L\rightarrow \centr L$ be a semilinear transformation and $A\in\aut L$.
Let us compute the vector $(\alpha',\beta',\gamma')$ that determines
$A\varphi A^{-1}$. We have:
\begin{align*}
x_1A\varphi A^{-1}=(a_{11}x_1+a_{12}x_2+a_{13}x_3+a_{14}x_4)\varphi A^{-1}&=
(a_{11}^p\alpha+a_{12}^p\beta+a_{13}^p\gamma)x_4 A^{-1}\\
&=
(a_{11}d_1)^{-1}(a_{11}^p\alpha+a_{12}^p\beta+a_{13}^p\gamma)x_4.
\end{align*}
Hence $\alpha'=(a_{11}d_1)^{-1}(a_{11}^p\alpha+a_{12}^p\beta+a_{13}^p\gamma)$.
We obtain similarly, that 
$\beta'=(a_{11}d_1)^{-1}(a_{22}^p\beta+a_{23}^p\gamma)$ and
$\gamma'=(a_{11}d_1)^{-1}d_1^p$. 
As above, we let $\varrho$ denote this action of $\aut L$ on $\F^3$.
We need to determine the orbits of vectors $(\alpha,\beta,\gamma)\in\F^3$ 
under the action $\varrho$.

Let $v=(\alpha,\beta,\gamma)\in\F^3$. If $v=(0,0,0)$, 
then $\{v\}$ is clearly an
$\aut L$-orbit. Suppose that $\gamma\neq 0$. Then
$$
(\alpha,\beta,\gamma)\begin{pmatrix}
\gamma^{1/p} & 0 & -\alpha^{1/p} & 0\\
0 & \gamma^{1/p} & -\beta^{1/p} & 0 \\
0 & 0 & \gamma^{2/p} & -\gamma^{1/p}\beta^{1/p}\\
0 & 0 & 0 & \gamma^{3/p}
\end{pmatrix}\varrho=(0,0,\gamma_1)
$$
with $\gamma_1\in\F$. 
Next, if $\gamma\in\F\setminus 0$, then
 $(0,0,\gamma)=(0,0,1)\diag{\gamma^{-1/p},\gamma^{2/p},\gamma^{1/p},1}\varrho$. 
Hence the 
set of vectors $(\alpha,\beta,\gamma)$ with $\gamma\neq 0$ form 
an single orbit with orbit representative $(0,0,1)$. 

Next, if $\gamma=0$, but $\beta\neq 0$, then
$$
(\alpha,\beta,0)
\begin{pmatrix}
1 & -(\alpha/\beta)^{1/p} & 0 & 0\\
0 & 1 & 0 & 0\\
0 & 0 & 1 & 0\\
0 & 0 & 0 & 1
\end{pmatrix}\varrho=(0,\beta,0)
$$
with $\beta_1\in\F$. 

We claim that $(0,\beta_1,0)$ and $(0,\beta_2,0)$ are in the same $\aut L$-orbits
if and only if $\beta_1\beta_2^{-1}$ is a square in $\F$. First assume that
$(0,\beta_1,0)$ and $(0,\beta_2,0)$ are in the same $\aut L$-orbits. Then
$$
(0,\beta_2,0)=(0,\beta_1,0)(A\varrho)
=(a_{11}d_1)^{-1}(\beta_1a_{12}^p,\beta_1a_{22}^p,0).
$$
From this we obtain that $a_{12}=0$ and that 
$\beta_2=(a_{11}d_1)^{-1}a_{22}^p\beta_1$ which gives that 
$\beta_1\beta_2^{-1}=a_{11}^{2}a_{22}^{1-p}$. 
Since $p$ is odd, $\beta_1\beta_2^{-1}\in(\F^*)^2$. 
Assume next that $\beta_1\beta_2^{-1}=\varepsilon^2$ with some
$\varepsilon\in\F$. Then 
$$
(0,\beta_1,0)\diag{\varepsilon,1,\varepsilon,\varepsilon^2}\varrho=
(0,\varepsilon^{-2}\beta_1,0)=(0,\beta_2,0).
$$ 
Hence two elements $(0,\beta_1,0)$ and $(0,\beta_2,0)$ are in the same
orbit if and only if $\beta_1\beta_2^{-1}$ is a square.

Finally, if $\gamma=\beta=0$, but $\alpha\neq 0$, then
$(\alpha,0,0)\diag{1,\alpha,\alpha,\alpha}\varrho=(1,0,0)$. 

Thus we obtain the following elements 
are $\aut L$-orbit representatives:
$(0,0,0)$, $(1,0,0)$, $(0,\beta,0)$, $(0,0,1)$. Further 
$(0,\beta_1,0)$ and $(0,\beta_2,0)$ are in the same
orbit if and only if $\beta_1\beta_2^{-1}$ is a square.

Hence, up to isomorphism,  the possible restricted Lie algebra structures on $L$ are as follows:
\begin{eqnarray*}
L_{4,3}^1&=&\left<x_1,\ x_2,\ x_3,\ x_4\mid [x_1,x_2]=x_3,\ [x_1,x_3]=x_4
\right>;\\
L_{4,3}^2&=&\left<x_1,\ x_2,\ x_3,\ x_4\mid [x_1,x_2]=x_3,\ [x_1,x_3]=x_4,\ 
\res x_1=x_4\right>;\\
L_{4,3}^3(\beta)&=&\left<x_1,\ x_2,\ x_3,\ x_4\mid [x_1,x_2]=x_3,\ [x_1,x_3]=x_4,\ 
\res x_2=\beta x_4\right>;\\
L_{4,3}^4&=&\left<x_1,\ x_2,\ x_3,\ x_4\mid [x_1,x_2]=x_3,\ [x_1,x_3]=x_4,\ 
\res x_3=x_4\right>\\
\end{eqnarray*}


where $\beta\in\F^*$. 
Further $L_{4,1}^3(\beta_1)\cong L_{4,1}^3(\beta_2)$ if and only if $\beta_1\beta_2^{-1}$ is a square.

\subsection{Fields of characteristic $3$}\label{char3}
In this case we have
\begin{align*}
\resss{(a x_1 +b x_2 +c x_3+dx_4)}&
=a^3\resss x_1 +b^3 \resss x_2+c^3 \resss x_3+a^2b x_4.
\end{align*}

If $\varphi$ is a $[3]$-map then, as in the general case,
 $\varphi$ is represented by a vector 
$(\alpha,\beta,\gamma)$ where
$$
x_1\varphi=\alpha x_4,\quad x_2\varphi=\beta x_4,\quad x_3\varphi=\gamma x_4,\quad
x_4\varphi=0.
$$

Let us compute the vector $(\alpha',\beta',\gamma')$ that determines
$A\varphi A^{-1}$ with $A\in\aut L$:
\begin{align*}
x_1A\varphi A^{-1}&=(a_{11}x_1+a_{12}x_2+a_{13}x_3+a_{14}x_4)\varphi A^{-1}\\
&=(a_{11}^3\alpha+a_{12}^3\beta+a_{13}^3\gamma+a_{11}^2a_{12})x_4 A^{-1}\\
&=(a_{11}d_1)^{-1}(a_{11}^3\alpha+a_{12}^3\beta+a_{13}^3\gamma+a_{11}^2a_{12})x_4.
\end{align*}
Hence $\alpha'=(a_{11}d_1)^{-1}(a_{11}^3\alpha+a_{12}^3\beta+a_{13}^3\gamma+a_{11}^2a_{12})$
and also
$\beta'=(a_{11}d_1)^{-1}(a_{22}^3\beta+a_{23}^3\gamma)$ and 
$\gamma'=(a_{11}d_1)^{-1}d_1^3=a_{11}^{-1}d_1^2$. 
Thus in matrix form we obtain that the right action $\varrho$ on the set of
$[3]$-maps can be written as
\begin{equation}\label{act2}
(\alpha',\beta',\gamma')=(\alpha,\beta,\gamma)A\varrho=(a_{11}^2a_{22})^{-1}(\alpha,\beta,\gamma)
\begin{pmatrix}
a_{11}^3 &0 & 0\\
a_{12}^3 & a_{22}^3 & 0\\
a_{13}^3 & a_{23}^3 & d_1^3
\end{pmatrix}
+
(a_{22}^{-1}a_{12}, 0, 0).
\end{equation}
Note that $\varrho$ is  not a linear action.
%
Let $v=(\alpha,\beta,\gamma)\in\F^3$.  Suppose that $\gamma\neq 0$. Then
$$
(\alpha,\beta,\gamma)\begin{pmatrix}
\gamma^{1/3} & 0 & -\alpha^{1/3} & 0\\
0 & \gamma^{1/3} & -\beta^{1/3} & 0 \\
0 & 0 & \gamma^{2/3} & -\gamma^{1/3}\beta^{1/3}\\
0 & 0 & 0 & \gamma
\end{pmatrix}\varrho
=(0,0, \gamma^2),
$$
and 
$$
(0,0, \gamma^2)\diag{ \gamma^{-2/3}, \gamma^{-2/3}, \gamma^{-4/3},\gamma^{-2}}\varrho
=(0,0, 1).
$$ 
Hence the vectors $(\alpha,\beta,\gamma)$, with $\gamma\neq 0$, represent the same restricted Lie algebra as  $(0,0,1)$. 

We notice that the if $A\in \aut L$ as above then 
$(0,0,0)A\varrho=(a_{12}a_{22}^{-1},0,0)$ and hence the $[p]$-maps represented 
by the vectors of the form $(\alpha,0,0)$ form a single orbit with 
orbit representative $(0,0,0)$.

It remains to describe the 
orbits of the $[p]$-maps that are represented by the vectors of 
the form $(\alpha,\beta,0)$ with $\beta\neq 0$. 
For $\beta\in\F^*$, let $\K_{\beta}$ denote the set $\{ \beta x^3+x \mid x \in \F \}$.
Then $\K_{\beta}$ is an $\F_3$-subspace, but it depends on $\F$ and on $\beta$.
For instance if $\F$ is finite and $\beta x^3+x$ has a solution other than 0,
then it has codimension 1, otherwise $\K_{\beta}=\F$.

\begin{lemma}
Let $\alpha_1, \alpha_2\in\F$ and  $\beta_1, \beta_2\in\F^*$. Then
$(\alpha_1,\beta_1,0)$ and $(\alpha_2,\beta_2,0)$ represent
isomorphic restricted Lie algebras if and only if $\beta_1\beta_2^{-1}$ is a
square and $\alpha_2\sqrt{\beta_2/\beta_1}+\alpha_1\in \K_{\beta_1}$ or
$\alpha_2\sqrt{\beta_2/\beta_1}-\alpha_1\in \K_{\beta_1}$.
\end{lemma}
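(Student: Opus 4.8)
The plan is to observe that, since both restricted algebras share the underlying Lie algebra $L_{4,3}$, the framework of the introduction reduces the claim to a computation of $\aut L$-orbits: the vectors $(\alpha_1,\beta_1,0)$ and $(\alpha_2,\beta_2,0)$ represent isomorphic restricted Lie algebras precisely when they lie in the same orbit under the action $\varrho$ recorded in \eqref{act2}. So the whole lemma becomes a matter of making that action explicit on vectors of the form $(\alpha,\beta,0)$ with $\beta\in\F^*$ and reading off the invariants.

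First I would specialize \eqref{act2} to $\gamma=0$. Carrying out the matrix product and using $d_1=a_{11}a_{22}$, the third coordinate stays $0$, so the orbit of $(\alpha,\beta,0)$ consists of vectors of the same shape, and one obtains
\begin{align*}
\beta'&=a_{11}^{-2}a_{22}^2\,\beta,\\
\alpha'&=\frac{a_{11}}{a_{22}}\bigl(\alpha+\beta\,(a_{12}/a_{11})^3+(a_{12}/a_{11})\bigr).
\end{align*}
The natural change of variables is $s=a_{22}/a_{11}$ and $t=a_{12}/a_{11}$, where $s$ may be any nonzero scalar and $t$ any scalar, since $a_{11},a_{22}\in\F^*$ and $a_{12}\in\F$ are free. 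In these terms $\beta'=s^2\beta$ and $\alpha'=s^{-1}(\alpha+\beta t^3+t)$, equivalently $s\alpha'-\alpha=\beta t^3+t$.

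Now I would extract the two invariants. From $\beta'=s^2\beta$ one reads that $\beta'\beta^{-1}$ is a square, giving the necessary condition that $\beta_1\beta_2^{-1}$ be a square; conversely any square value of $\beta'\beta^{-1}$ is attained by choosing $s$ accordingly. Next, as $t$ ranges over $\F$ the element $\beta t^3+t$ ranges over all of $\K_{\beta}$ by its very definition, so the relation $s\alpha'-\alpha=\beta t^3+t$ says exactly that $s\alpha'-\alpha\in\K_{\beta}$ for the chosen $s$. Finally I would record that, once $\beta_2/\beta_1$ is a square, the admissible values of $s$ are exactly the two square roots $s=\pm\varepsilon$ with $\varepsilon=\sqrt{\beta_2/\beta_1}$ (both are realized because $a_{11}$ and $a_{22}$ are independent), and that $\K_{\beta_1}$ is an $\F_3$-subspace and hence closed under negation. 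Taking $\beta=\beta_1$, $\alpha=\alpha_1$, $\alpha'=\alpha_2$, the orbit condition ``$s\alpha_2-\alpha_1\in\K_{\beta_1}$ for some $s\in\{\varepsilon,-\varepsilon\}$'' becomes $\varepsilon\alpha_2-\alpha_1\in\K_{\beta_1}$ or $\varepsilon\alpha_2+\alpha_1\in\K_{\beta_1}$, which is precisely the stated condition.

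The step I expect to be most delicate is the bookkeeping around the two square roots of $\beta_2/\beta_1$ together with the negation-symmetry of $\K_{\beta_1}$: it is exactly this interplay that produces both the $+$ and $-$ branches in the statement. I would therefore take care to argue both necessity (an arbitrary $A\in\aut L$ realizes one of the two signs, so membership in one of the two cosets is forced) and sufficiency (given either membership, one builds an automorphism by choosing $s$ to be the matching square root and $t$ to realize the required element of $\K_{\beta_1}$) rather than treating the equivalence as automatic.
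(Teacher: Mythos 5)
Your proposal is correct and follows essentially the same route as the paper: both specialize the action \eqref{act2} to vectors $(\alpha,\beta,0)$, read off $\beta_2/\beta_1=(a_{22}/a_{11})^2$ as a square, and reduce the first coordinate to the condition $\pm\sqrt{\beta_2/\beta_1}\,\alpha_2-\alpha_1\in\K_{\beta_1}$, with the two signs and the negation-closure of $\K_{\beta_1}$ producing the $+$ and $-$ branches. Your substitution $s=a_{22}/a_{11}$, $t=a_{12}/a_{11}$ is just a cleaner bookkeeping of the computation the paper carries out directly with the matrix entries.
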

\begin{proof}
The vectors $(\alpha_1,\beta_1,0)$ and $(\alpha_2,\beta_2,0)$ represent
isomorphic restricted Lie algebras if and only if there exists  $A\in \aut L$ such that $(\alpha_1,\beta_1,0)A\varrho=(\alpha_2,\beta_2,0)$. That is  
\begin{align}\label{char 3 orbits}
(\alpha_2,\beta_2,0)=(\alpha_1,\beta_1,0)A\varrho=a_{11}^{-2}a_{22}^{-1}(a_{11}^3\alpha_1+a_{12}^3\beta_1+a_{11}^2a_{12},a_{22}^3\beta_1,0).
\end{align}
Now we deduce  that $\beta_2=a_{11}^{-2}a_{22}^2\beta_1$. In particular
$\beta_2/\beta_1$ is a square. Furthermore, as 
$a_{11}a_{22}^{-1}=\pm\sqrt{\beta_1/\beta_2}$ (meaning that the equality 
holds with plus or minus), 
\begin{multline*}
\alpha_2=\pm a_{11}^{-3}\sqrt{\beta_1/\beta_2}(a_{11}^3\alpha_1+a_{12}^3\beta_1+a_{11}^2a_{12})=\\
\pm(\sqrt{\beta_1/\beta_2}\alpha_1+\sqrt{\beta_1/\beta_2}(a_{11}^{-1}a_{12})^3\beta_1+\sqrt{\beta_1/\beta_2}a_{11}^{-1}a_{12}).
\end{multline*}
Thus
$$
\sqrt{\beta_2/\beta_1}\alpha_2=\pm(\alpha_1+(a_{11}^{-1}a_{12})^3\beta_1+a_{11}^{-1}a_{12}).
$$
Therefore $\sqrt{\beta_2/\beta_1}\alpha_2\pm\alpha_1\in \K_{\beta_1}$,  as
claimed.

Conversely, suppose that  $\alpha_2\sqrt{\beta_2/\beta_1}+\alpha_1\in \K_{\beta_1}$
or  $\alpha_2\sqrt{\beta_2/\beta_1}-\alpha_1\in \K_{\beta_1}$. Choose
$\delta\in\F$ such that, in the latter case, 
$\sqrt{\beta_2/\beta_1}\alpha_2-\alpha_1=\beta_1\delta^3+\delta$,
while 
$-\sqrt{\beta_2/\beta_1}\alpha_2-\alpha_1=\beta_1\delta^3+\delta$ in 
the former.
Then apply
the automorphism
$$
A=\begin{pmatrix}
1 & \delta & 0 & 0\\
0 & \mp\sqrt{\beta_2/\beta_1} & 0 & 0\\
0 & 0 & \mp\sqrt{\beta_2/\beta_1} & 0\\
0 & 0 & 0 &  \mp\sqrt{\beta_2/\beta_1}
\end{pmatrix}
$$
with the sign of $\sqrt{\beta_2/\beta_1}$ chosen accordingly.
It is straight to see that $(\alpha_1,\beta_1,0)A\varrho=(a_2,\beta_2,0)$. 
\end{proof}

Hence, up to isomorphism,  we have the following restricted Lie 
algebras with the underlying Lie algebra $L_{4,3}$ over a field of 
characteristic three:
\begin{eqnarray*}
K_{4,3}^1&=&\left<x_1,\ x_2,\ x_3,\ x_4\mid [x_1,x_2]=x_3,\ [x_1,x_3]=x_4\right>;\\
K_{4,3}^2&=&\left<x_1,\ x_2,\ x_3,\ x_4\mid [x_1,x_2]=x_3,\ [x_1,x_3]=x_4,\ \resss x_3=x_4\right>;\\
K_{4,3}^3(\alpha,\beta)&=&\left<x_1,\ x_2,\ x_3,\ x_4\mid [x_1,x_2]=x_3,\ [x_1,x_3]=x_4,\ 
\resss x_1=\alpha x_4,\ \resss x_2=\beta x_4\right>
\end{eqnarray*}
where $\alpha\in\F$ and $\beta\in\F^*$. 
Furthermore, $K_{4,3}^2(\alpha_1,\beta_1)\cong K_{4,3}^2(\alpha_2,\beta_2)$ 
if and only if $\beta_1\beta_2^{-1}$ is a
square and $\alpha_2\sqrt{\beta_2/\beta_1}+\alpha_1\in \K_{\beta_1}$ or
$\alpha_2\sqrt{\beta_2/\beta_1}-\alpha_1\in \K_{\beta_1}$.
The arguments in the section show that these algebras are pairwise 
non-isomorphic.

\subsection{Characteristic 2} 
As noted at the beginning of the section, in characteristic 2 the Lie 
algebra is not restrictable.

\end{document}